\documentclass[11pt]{amsart}

\author[C.~Sanna]{Carlo Sanna}
\thanks{$\dagger\,$C.~Sanna is supported by a postdoctoral fellowship of INdAM and is a member of the INdAM group GNSAGA}
\address{Universit\`a di Genova\\Department of Mathematics\\Genova, Italy}
\email{carlo.sanna.dev@gmail.com}

\keywords{asymptotics; product set; random set}
\subjclass[2010]{Primary: 11N37, Secondary: 11N99}
\title{A note on product sets of random sets}

\usepackage{amsmath}
\usepackage{amssymb}
\usepackage{amsthm}
\usepackage{geometry}
\geometry{left=1.15in, right=1.15in, top=.72in, bottom=.72in}
\usepackage{color}
\usepackage{hyperref}
\usepackage{enumerate}
\hypersetup{colorlinks=true}

\newtheorem{thm}{Theorem}[section]

\newtheorem{lem}[thm]{Lemma}
\theoremstyle{remark}

\uchyph=0

\begin{document}

\begin{abstract}
Given two sets of positive integers $A$ and $B$, let $AB := \{ab : a \in A,\, b \in B\}$ be their \emph{product set} and put $A^k := A \cdots A$ ($k$ times $A$) for any positive integer $k$.
Moreover, for every positive integer $n$ and every $\alpha \in [0,1]$, let $\mathcal{B}(n, \alpha)$ denote the probabilistic model in which a random set $A \subseteq \{1, \dots, n\}$ is constructed by choosing independently every element of $\{1, \dots, n\}$ with probability $\alpha$.
We prove that if $A_1, \dots, A_s$ are random sets in $\mathcal{B}(n_1, \alpha_1), \dots, \mathcal{B}(n_s, \alpha_s)$, respectively, $k_1, \dots, k_s$ are fixed positive integers, $\alpha_i n_i \to +\infty$, and $1/\alpha_i$ does not grow too fast in terms of a product of $\log n_j$; then $|A_1^{k_1} \cdots A_s^{k_s}| \sim \frac{|A_1|^{k_1}}{k_1!}\cdots\frac{|A_s|^{k_s}}{k_s!}$ with probability $1 - o(1)$.
This is a generalization of a result of Cilleruelo, Ramana, and Ramar\'e, who considered the case $s = 1$ and $k_1 = 2$.
\end{abstract}

\maketitle

\section{Introduction}

Given two sets of positive integers $A$ and $B$, let $AB := \{ab : a \in A,\, b \in B\}$ be their \emph{product set} and put $A^k := A \cdots A$ ($k$ times $A$) for any positive integer $k$.

Problems involving the cardinalities of product sets have been considered by many researchers.
For example, the study of $M_n := |\{1,\dots,n\}^2|$ as $n \to +\infty$ is known as the ``multiplicative table problem'' and was started by Erd\H{o}s~\cite{MR73619, MR0126424}.
The exact order of magnitude of $M_n$ was determined by Ford~\cite{MR2434882} following earlier work of Tenenbaum~\cite{MR928635}.
Furthermore, Koukoulopoulos~\cite{MR3187928} provided uniform bounds for $|\{1, \dots, n_1\}\cdots\{1, \dots, n_s\}|$ holding for a wide range of $n_1, \dots, n_s$.
Cilleruelo, Ramana, and Ramar\'e~\cite{MR3640773} proved asymptotics or bounds for \mbox{$|(A \cap \{1,\dots,n\})^2|$} when $A$ is the set of shifted prime numbers, the set of sums of two squares, or the set of shifted sums of two squares.

For every positive integer $n$ and every $\alpha \in [0,1]$, let $\mathcal{B}(n, \alpha)$ denote the probabilistic model in which a random set $A \subseteq \{1, \dots, n\}$ is constructed by choosing independently every element of $\{1, \dots, n\}$ with probability $\alpha$.
Cilleruelo, Ramana, and Ramar\'e~\cite{MR3640773} proved the following:

\begin{thm}\label{thm:CRR}
Let $A$ be a random set in $\mathcal{B}(n, \alpha)$.
If $\alpha n \to +\infty$ and $\alpha = o((\log n)^{-1/2})$, then $|A^2| \sim \frac{|A|^2}{2}$ with probability $1 - o(1)$.
\end{thm}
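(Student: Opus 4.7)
My approach is the standard second-moment / collision-counting method. The upper bound $|A^{2}| \leq \binom{|A|+1}{2}$ is automatic, since every element of $A^{2}$ is the product of some unordered (multiset) pair from $A$; and a Chernoff (or Chebyshev) argument, using $\alpha n \to +\infty$, gives $|A| = (1+o(1))\alpha n$ with probability $1-o(1)$. Hence this upper bound already matches the target, and everything reduces to proving a matching lower bound, equivalently to bounding the excess
\[
C \;:=\; \binom{|A|+1}{2} - |A^{2}|.
\]

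Let $r(p) := |\{\{a,b\} \subseteq A : ab=p\}|$ count multiset pair-representations of $p$ inside $A$. Then $C = \sum_{p}(r(p)-1)^{+}$, and the elementary inequality $(r-1)^{+} \leq \binom{r}{2}$ (valid for every integer $r \geq 0$) gives $C \leq \sum_{p}\binom{r(p)}{2}$, which counts unordered pairs of distinct representations in $A$ sharing a common product. Taking expectations and observing that $4$-tuples with repeated entries involve strictly fewer independent Bernoulli choices and hence a probability $\alpha^{k}$ with $k < 4$ (so give lower-order contributions), one obtains
\[
\mathbb{E}[C] \;\ll\; \alpha^{4}\, E_{n}, \qquad E_{n} \;:=\; |\{(a,b,c,d)\in[n]^{4} : ab=cd\}|.
\]

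The crucial step is the sharp estimate $E_{n} \ll n^{2}\log n$. Every solution of $ab=cd$ in positive integers admits a unique parametrization $a=gx$, $c=gy$, $b=yu$, $d=xu$ with $\gcd(x,y)=1$ (obtained by setting $g=\gcd(a,c)$). The bounds $a,b,c,d \leq n$ then translate into $g,u \leq n/\max(x,y)$, so
\[
E_{n} \;=\; \sum_{\substack{x,y\geq 1 \\ \gcd(x,y)=1}} \left\lfloor \frac{n}{\max(x,y)} \right\rfloor^{2} \;\ll\; n^{2}\sum_{M \leq n}\frac{\phi(M)}{M^{2}} \;\ll\; n^{2}\log n,
\]
where I use the standard asymptotic $\sum_{M \leq n}\phi(M)/M^{2} \sim (6/\pi^{2})\log n$.

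Combining the pieces, $\mathbb{E}[C] \ll \alpha^{4}n^{2}\log n = (\alpha n)^{2}\cdot \alpha^{2}\log n$, and this is $o((\alpha n)^{2})$ precisely under the hypothesis $\alpha = o((\log n)^{-1/2})$. Markov's inequality then yields $C = o(|A|^{2})$ with probability $1-o(1)$, and therefore $|A^{2}| = \binom{|A|+1}{2} - C \sim |A|^{2}/2$ with probability $1-o(1)$. The main obstacle I expect is extracting the factor $\log n$ (rather than $\log^{2} n$) in $E_{n}$: the naive bound $\tau(ab) \leq \tau(a)\tau(b)$ yields only $\log^{2} n$ and forces the stronger hypothesis $\alpha = o(1/\log n)$, whereas the GCD-parametrization above recovers the extra logarithmic saving that is sharp for this method and matches the stated density range exactly.
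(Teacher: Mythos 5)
Your proposal is correct and follows essentially the same route as the paper's argument (which obtains this statement as the case $s=1$, $k_1=2$ of Theorem~\ref{thm:main}): a nonnegative defect $C=\binom{|A|+1}{2}-|A^2|$, an expectation bound for $C$ via the multiplicative energy estimate $E_n\ll n^2\log n$, and Markov's inequality, with $|A|\sim\alpha n$ supplied by Chebyshev. The only real difference is in the sub-lemma: you prove $E_n\ll n^2\log n$ by the coprime parametrization $a=gx$, $c=gy$, $b=yu$, $d=xu$, whereas the paper uses the matrix decomposition of Lemma~\ref{lem:matrix} and Lemma~\ref{lem:energy} (chosen because it extends to $a_1\cdots a_n=b_1\cdots b_m$, which the general theorem requires); both yield the same bound here, and your parenthetical claim that tuples with repeated entries contribute lower order is easily justified since there are only $O(n\log^2 n)$ solutions of $a^2=cd$ in $[n]^3$.
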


The contribution of this paper is the following generalization of Theorem~\ref{thm:CRR}.

\begin{thm}\label{thm:main}
Let $A_1, \dots, A_s$ be random sets in $\mathcal{B}(n_1, \alpha_1), \dots, \mathcal{B}(n_s, \alpha_s)$, respectively; and let $k_1, \dots, k_s$ be fixed positive integers.
If $\alpha_i n_i \to +\infty$ and
\begin{equation*}
\alpha_i = o\!\left(\left((\log n_1)^{k_1 - 1}\prod_{i \,=\, 2}^s (\log n_i)^{k_i}\right)^{-(k_1 + \cdots + k_s - 1)/2}\right) ,
\end{equation*}
for $i=1,\dots,s$, then $|A_1^{k_1} \cdots A_s^{k_s}| \sim \frac{|A_1|^{k_1}}{k_1!}\cdots\frac{|A_s|^{k_s}}{k_s!}$ with probability $1 - o(1)$.
\end{thm}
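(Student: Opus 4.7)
The plan is to squeeze $|A_1^{k_1} \cdots A_s^{k_s}|$ between matching upper and lower bounds of size $(1+o(1))\prod_{i} |A_i|^{k_i}/k_i!$. Set $K := k_1 + \cdots + k_s$, let $\mathcal{T} := A_1^{k_1} \times \cdots \times A_s^{k_s}$ be the set of ordered $K$-tuples with block structure, and let $\pi : \mathcal{T} \to \mathbb{N}$ be the product map; write $R(m) := |\pi^{-1}(m)|$. A standard Chernoff bound and the hypothesis $\alpha_i n_i \to \infty$ give $|A_i| = (1+o(1))\alpha_i n_i$ with probability $1-o(1)$, and I work on this event throughout. The \emph{upper bound} is immediate: each element of $A_1^{k_1}\cdots A_s^{k_s}$ is realised by at least one tuple of multi-subsets of the $A_i$'s of sizes $k_i$, so
\[
|A_1^{k_1}\cdots A_s^{k_s}| \le \prod_{i=1}^{s}\binom{|A_i|+k_i-1}{k_i} = (1+o(1))\prod_{i=1}^{s}\frac{|A_i|^{k_i}}{k_i!}.
\]

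For the matching \emph{lower bound} I would apply Cauchy--Schwarz to the fibres of $\pi$, obtaining $|A_1^{k_1}\cdots A_s^{k_s}| \ge |\mathcal{T}|^2 / \sum_m R(m)^2$, and decompose $\sum_m R(m)^2 = D + N$. Here $D$ counts pairs of ordered tuples in $\mathcal{T}^2$ differing only by a permutation within each block, so $D \le (\prod_i k_i!)\, |\mathcal{T}|$, whereas $N$ counts the remaining ``non-trivial'' collisions. Matching the bounds then reduces to showing $N = o(|\mathcal{T}|)$ with high probability, and by Markov's inequality it suffices to prove
\[
\mathbb{E}[N] \;=\; \sum_{(a,b)\text{ bad}} \prod_{i=1}^{s} \alpha_i^{d_i(a,b)} \;=\; o\!\left(\prod_{i=1}^{s}(\alpha_i n_i)^{k_i}\right),
\]
where the sum is over pairs $(a,b) \in \prod_i \{1,\ldots,n_i\}^{k_i} \times \prod_i \{1,\ldots,n_i\}^{k_i}$ with $\prod_{i,j} a_{i,j} = \prod_{i,j} b_{i,j}$ that are not block-wise permutations, and $d_i(a,b)$ is the number of distinct values appearing in block $i$ of the pair.

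The heart of the proof is a sharp count of bad pairs. Following the approach of Cilleruelo--Ramana--Ramar\'e for $(s,k_1) = (1,2)$---where every solution to $ab = cd$ in $\{1,\ldots,n\}^4$ is written as $(a,b,c,d) = (ex,yw,ey,xw)$ with $\gcd(x,y)=1$ and the count collapses to $\sum_m \varphi(m)/m^2 \ll \log n$---I would parametrise a \emph{generic} bad pair (one with $d_i = 2k_i$ for every $i$) by singling out one coordinate of block~$1$ and writing it via the reduced fraction of the ratio formed by the other $2K-1$ coordinates. After summing against the size constraints this should produce
\[
\#\{\text{generic bad pairs}\} \;\ll\; \prod_{i=1}^{s} n_i^{k_i}\cdot (\log n_1)^{k_1-1}\prod_{j=2}^{s}(\log n_j)^{k_j},
\]
which is exactly the log exponent tolerated by the hypothesis. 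Degenerate pairs (with some $d_i < 2k_i$, coming from coincidences within or between blocks) are strictly smaller contributors: each missing value trades a factor $\alpha_i^{-1}$ in the weight for at most a factor $n_i$ in the count, and the hypothesis $\alpha_i n_i \to \infty$ absorbs the trade.

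Putting everything together, $\mathbb{E}[N]/|\mathcal{T}| \ll \prod_i \alpha_i^{k_i}\cdot f$ with $f := (\log n_1)^{k_1-1}\prod_{j\ge 2}(\log n_j)^{k_j}$. Raising the hypothesis $\alpha_i = o(f^{-(K-1)/2})$ to the $k_i$-th power and multiplying over $i$ yields $\prod_i \alpha_i^{k_i} = o(f^{-K(K-1)/2})$, so the ratio is $o(f^{\,1-K(K-1)/2}) = o(1)$ for $K \ge 2$ (the case $K=1$ is trivial). The main obstacle I anticipate is precisely the sharp counting estimate above: a direct appeal to the divisor-bound $\sum_m \tau_K(m)^2 \ll M(\log M)^{K^2-1}$ loses too many log-factors to close the gap. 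Extracting the correct total power $K-1$, split as $k_1-1$ on $\log n_1$ (the coordinate in block~$1$ that is absorbed by the multiplicative relation) and $k_j$ on $\log n_j$ for $j \ge 2$, requires the GCD/coprime parametrisation to be executed carefully and symmetrically on the two sides of the relation.
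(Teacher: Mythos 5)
Your overall architecture is sound and in fact close to the paper's: your upper bound via multiset counting is the paper's Lemma~\ref{lem:prodbound}, and your Cauchy--Schwarz/energy lower bound plus Markov on $\mathbb{E}[N]$ plays the same role as the paper's Bonferroni computation of $\mathbb{E}(|A_1^{k_1}\cdots A_s^{k_s}|)$ (Lemma~\ref{lem:Ex}) followed by Markov applied to the nonnegative deficiency $X=\prod_i\binom{|A_i|+k_i-1}{k_i}-|\prod_iA_i^{k_i}|$. The genuine gap is the counting estimate you yourself identify as the heart of the proof: the bound
\begin{equation*}
\#\{\text{generic bad pairs}\}\;\ll\;\prod_{i=1}^{s}n_i^{k_i}\cdot(\log n_1)^{k_1-1}\prod_{j=2}^{s}(\log n_j)^{k_j}
\end{equation*}
is \emph{false} once $K:=k_1+\cdots+k_s\ge 3$. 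Already for $s=1$, $k_1=3$, the number of solutions of $a_1a_2a_3=b_1b_2b_3$ with all variables in $\{1,\dots,n\}$ has order $n^3(\log n)^4$, not $n^3(\log n)^2$: writing each solution as a $3\times 3$ matrix $(c_{i,j})$ with $a_h=\prod_i c_{i,h}$ and $b_k=\prod_j c_{k,j}$, the dominant contribution comes from the $4$-dimensional face of the exponent polytope where all row and column products have size $n^{1-o(1)}$, and on the positive proportion of matrices with squarefree, pairwise coprime entries the matrix is uniquely recoverable from $(a,b)$, so this is a lower bound for the number of distinct solutions, essentially all of which are generic. In general the correct total log exponent is $(K-1)^2$, i.e.\ $f^{K-1}$ in your notation; the single-log phenomenon you are extrapolating from Cilleruelo--Ramana--Ramar\'e is special to $K=2$, where $(K-1)^2=K-1$, and no gcd/coprime parametrisation will collapse the count further for $K\ge3$. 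The right generalization is Ford's matrix parametrisation, which is exactly the paper's Lemmas~\ref{lem:matrix} and~\ref{lem:energy}.

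The good news is that your framework survives once the false target is replaced by the provable bound $\prod_i n_i^{k_i}\,f^{K-1}$: for generic pairs your weight is $\prod_i\alpha_i^{2k_i}$, so $\mathbb{E}[N_{\mathrm{gen}}]\ll\prod_i(\alpha_i n_i)^{k_i}\cdot\prod_i\alpha_i^{k_i}f^{K-1}$, and the hypothesis gives $\prod_i\alpha_i^{k_i}=o\bigl(f^{-K(K-1)/2}\bigr)$, whence the extra factor is $o\bigl(f^{-(K-1)(K-2)/2}\bigr)=o(1)$. Indeed the hypothesis is calibrated so that a saving of merely \emph{two} factors of $\alpha$ beyond $\prod_i\alpha_i^{k_i}$ already beats $f^{K-1}$, which is how the paper treats generic and degenerate pairs uniformly: it shows every coincident pair $\boldsymbol{a}\neq\boldsymbol{a}'$ with $\|\boldsymbol{a}\|=\|\boldsymbol{a}'\|$ has $\mathbb{P}(E_{\boldsymbol{a}}\wedge E_{\boldsymbol{a}'})\le(\max_{i,j}\alpha_i\alpha_j)\prod_i\alpha_i^{k_i}$ and then bounds the number of \emph{all} such pairs by Lemma~\ref{lem:energy}. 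Your separate treatment of degenerate pairs (trading $\alpha_i^{-1}$ in the weight against $n_i$ in the count) would additionally require the Ford-type count with one fewer free variable and careful log bookkeeping; following the paper's uniform bound avoids that case analysis entirely.
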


\section{Notation}

We employ the Landau--Bachmann ``Big Oh'' and ``little oh'' notations $O$ and $o$, as well as the associated Vinogradov symbol $\ll$, with their usual meanings.
Any dependence of implied constants is explicitly stated or indicated with subscripts.
For real random variables $X$ and $Y$, we say that ``$X = o(Y)$ with probability $1 - o(1)$'' if $\mathbb{P}(|X| \geq \varepsilon|Y|) = o_\varepsilon(1)$ for every $\varepsilon > 0$, and that ``$X \sim Y$ with probability $1 - o(1)$'' if $X = Y + o(Y)$ with probability $1 - o(1)$.

\section{Preliminaries}

In this section we collect some preliminary results not directly related with product sets.

\begin{lem}\label{lem:rankin}
Let $m$ be a positive integer.
We have
\begin{equation*}
\sum_{a_1 \cdots a_m \,\leq\, x} \frac1{a_1 \cdots a_m} \ll_m (\log x)^m ,
\end{equation*}
for all $x \geq 2$.
\end{lem}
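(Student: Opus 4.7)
My plan is to apply Rankin's trick, as the label of the lemma suggests. I will introduce a positive parameter $\sigma > 0$ to be chosen later, and exploit the fact that whenever $a_1 \cdots a_m \leq x$ the quantity $(x/(a_1 \cdots a_m))^\sigma$ is at least $1$. Inserting this harmless factor into each summand and then dropping the constraint enlarges the sum to a product of Dirichlet series, giving
\begin{equation*}
\sum_{a_1 \cdots a_m \,\leq\, x} \frac{1}{a_1 \cdots a_m} \;\leq\; x^\sigma \sum_{a_1, \ldots, a_m \,\geq\, 1} \frac{1}{(a_1 \cdots a_m)^{1+\sigma}} \;=\; x^\sigma\,\zeta(1+\sigma)^m .
\end{equation*}

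The remaining step is to balance the two factors. I will take $\sigma := 1/\log x$, which is positive since $x \geq 2$; this choice forces $x^\sigma = e$, while the elementary estimate $\zeta(1+\sigma) = 1/\sigma + O(1) \ll \log x$ (coming from the simple pole of $\zeta$ at $s = 1$, or more crudely by comparing $\sum_{n \geq 1} n^{-1-\sigma}$ with $\int_1^\infty t^{-1-\sigma}\,dt = 1/\sigma$) controls the zeta factor. Multiplying the two bounds yields the desired estimate $\ll_m (\log x)^m$.

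There is essentially no obstacle: the argument reduces to a one-parameter optimization plus the standard behaviour of $\zeta$ near $s = 1$. As a sanity check, I note that a strictly elementary alternative is available: since each $a_i$ is a positive integer and the remaining factors are at least $1$, the constraint $a_1 \cdots a_m \leq x$ already forces $a_i \leq x$ for every $i$, so one may bound the sum trivially by $\bigl(\sum_{a \leq x} 1/a\bigr)^m \ll_m (\log x)^m$. Either route suffices, but the Rankin-trick approach is the one suggested by the name of the lemma, and it is what I would write up.
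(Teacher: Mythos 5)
Your proposal is correct and is essentially the paper's own proof: the paper applies Rankin's method with $t := m/\log x$ and bounds $\sum_{a \geq 1} a^{-1-t} \leq 1 + 1/t \ll_m \log x$, which matches your choice $\sigma = 1/\log x$ and the estimate $\zeta(1+\sigma) \ll 1/\sigma$ up to an irrelevant constant in the parameter. Your elementary aside (each $a_i \leq x$, so the sum is at most $\bigl(\sum_{a \leq x} 1/a\bigr)^m$) is also a valid and even simpler route, but your main write-up coincides with the paper's argument.
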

\begin{proof}
This is a standard application of Rankin's method: For $t := m / \log x$, we have
\begin{align*}
\sum_{a_1 \cdots a_m \,\leq\, x} \frac1{a_1 \cdots a_m} &\leq x^t \sum_{a_1 \cdots a_m \,\leq\, x} \frac1{(a_1 \cdots a_m)^{1 + t}} \leq x^t \left(\sum_{a \,=\, 1}^\infty \frac1{a^{1 + t}}\right)^m \\
&\leq x^t \left(1 + \frac1{t}\right)^m \ll_m (\log x)^m ,
\end{align*}
as claimed.
\end{proof}

The next lemma is an upper bound on the number of matrices of positive integers with bounded products of rows and columns.

\begin{lem}\label{lem:matrix}
Let $m$ and $n$ be positive integers.
Then, for all $x_1, \dots, x_n, y_1, \dots, y_m \geq 2$, the number of $m \times n$ matrices $(c_{i,j})$ of positive integers satisfying $\prod_{i=1}^m c_{i,h} \leq x_h$ and $\prod_{j=1}^n c_{k,j} \leq y_k$, for $h=1,\dots,n$ and $k=1,\dots,m$, is at most
\begin{equation}\label{equ:matrixbound}
O_{m,n}\!\left(\left(\prod_{i \,=\, 1}^n x_i \prod_{j \,=\, 1}^{m} y_j \right)^{1/2} \left(\prod_{i \,=\, 1}^{n-1} \log x_i\right)^{m-1}\right)
\end{equation}
\end{lem}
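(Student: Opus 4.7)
My plan is to peel off the last column of the matrix, count its contribution via a ``no-logarithm'' square-root bound (the $n=1$ instance of the lemma itself), and handle the remaining $m \times (n-1)$ block by a column-wise weighted divisor sum. This produces one factor of $(\log x_h)^{m-1}$ for each $h = 1, \dots, n-1$ and no logarithm at all from the last column, which is exactly the asymmetry reflected in~\eqref{equ:matrixbound}.

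I first dispatch the case $n = 1$: if $z \geq 1$ and $w_1, \dots, w_m \geq 1$, then
\begin{equation*}
\#\bigl\{(c_1,\dots,c_m) \in \mathbb{Z}_{\geq 1}^m : c_k \leq w_k,\ c_1 \cdots c_m \leq z\bigr\} \ll_m \sqrt{z\, w_1 \cdots w_m}.
\end{equation*}
This is proved by induction on $m$: the base case uses $c_1 \leq \min(z, w_1) \leq \sqrt{z w_1}$, and the inductive step fixes $c_1 \in \{1,\dots,\lfloor w_1\rfloor\}$, applies the $(m-1)$-variable bound with $z$ replaced by $z/c_1$, and uses $\sum_{c_1 \leq w_1} c_1^{-1/2} \ll \sqrt{w_1}$. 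Alongside this, I need the weighted analogue of Lemma~\ref{lem:rankin},
\begin{equation*}
\sum_{a_1 \cdots a_m \,\leq\, x} (a_1 \cdots a_m)^{-1/2} \ll_m \sqrt{x}\,(\log x)^{m-1} \qquad (x \geq 2),
\end{equation*}
proved by an analogous induction: peel off $a_m$, apply the case $m-1$ with the threshold $x/a_m$, and note that $\sum_{a_m \leq x} a_m^{-1/2}\sqrt{x/a_m} = \sqrt{x}\sum_{a_m \leq x} a_m^{-1} \ll \sqrt{x}\log x$, which is exactly the one extra logarithm.

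For the main statement, assume $n \geq 2$ and fix the first $n-1$ columns of the matrix. The last column $(c_{1,n},\dots,c_{m,n})$ must satisfy $\prod_i c_{i,n} \leq x_n$ together with $c_{k,n} \leq Y_k := y_k/\prod_{j<n} c_{k,j}$, so the single-column bound above yields at most
\begin{equation*}
\ll_m \sqrt{x_n\, Y_1 \cdots Y_m} \;=\; \sqrt{x_n\, y_1 \cdots y_m}\,\prod_{k=1}^m \prod_{j=1}^{n-1} c_{k,j}^{-1/2}
\end{equation*}
admissible last columns, with the easy boundary cases where some $Y_k$ or $x_n$ is below~$2$ absorbed into $O_{m,n}(1)$. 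Summing over the first $n-1$ columns and discarding the row constraints $\prod_{j<n} c_{k,j} \leq y_k$ lets the remaining weighted sum factor as
\begin{equation*}
\prod_{h=1}^{n-1} \sum_{c_{1,h}\cdots c_{m,h} \,\leq\, x_h} \prod_{i=1}^m c_{i,h}^{-1/2},
\end{equation*}
and each inner sum is $\ll_m \sqrt{x_h}\,(\log x_h)^{m-1}$ by the weighted divisor estimate. Multiplying everything together produces precisely the bound~\eqref{equ:matrixbound}. The only delicate point is obtaining the exponent $m-1$ (rather than $m$) in the weighted divisor sum: a direct Rankin bound would lose one logarithm, which is why I would prove that estimate by the direct induction on $m$ sketched above. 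No other serious obstacle is anticipated.
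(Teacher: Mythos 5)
Your proof is correct, and it reaches the bound by a genuinely different decomposition than the paper's. The paper peels off only the corner entry $c_{m,n}$ via $\min(u,v) \leq \sqrt{uv}$, then sums over the rest of the last column \emph{and} the rest of the last row using $\sum_{c \leq W} c^{-1/2} \ll W^{1/2}$; this leaves an $(m-1)\times(n-1)$ block carrying the full harmonic weight $\prod c_{h,k}^{-1}$, which is handled column by column with the crude Rankin bound of Lemma~\ref{lem:rankin}, each of the $n-1$ columns contributing $(\log x_k)^{m-1}$ simply because only $m-1$ rows remain. You instead peel off the entire last column at once through a self-contained single-column counting lemma (itself an induction built from the same two ingredients), which leaves the full $m\times(n-1)$ block carrying the weight $\prod c_{k,j}^{-1/2}$; you then need the sharper weighted estimate $\sum_{a_1\cdots a_m \leq x}(a_1\cdots a_m)^{-1/2} \ll_m \sqrt{x}\,(\log x)^{m-1}$, and you correctly identify that a one-line Rankin shift would lose a logarithm there, so the direct induction is necessary. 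Both routes produce exactly $m-1$ logarithms for each of the first $n-1$ columns and none for the last, matching the asymmetry in~\eqref{equ:matrixbound}. Your version is somewhat more modular, with two clean reusable sublemmas, at the price of requiring the sharp $-1/2$-weighted divisor sum; the paper gets away with the softer Lemma~\ref{lem:rankin} because the last row has already been absorbed before the block sum. The boundary cases you flag (some $Y_k < 1$, or a threshold below $2$) are indeed harmless, since the corresponding counts vanish.
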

\begin{proof}
We follow the same arguments of~\cite[p.~380]{MR1815216}, where the case $m = n$ and $x_1 = \cdots = x_n = y_1 = \cdots = y_m$ is proved.

The number of choices for $c_{m,n}$ is at most
\begin{equation*}
\min\!\left(\frac{x_n}{\prod_{i=1}^{m-1} c_{i,n}}, \frac{y_m}{\prod_{j=1}^{n-1} c_{m,j}}\right) \leq \left(\frac{x_n y_m}{\prod_{i=1}^{m-1} c_{i,n}\prod_{j=1}^{n-1} c_{m,j}}\right)^{1/2} .
\end{equation*}
We shall sum this latter quantity over all the choices of $c_{i,n}$ and $c_{m,j}$, with $i=1,\dots,m-1$ and $j=1,\dots,n-1$.
Since $c_{i,n} \leq y_i / \prod_{k=1}^{n-1} c_{i,k}$ and $c_{m,j} \leq x_j / \prod_{h=1}^{m - 1} c_{h,j}$, we have
\begin{equation*}
\sum_{c_{i,n}} \frac1{c_{i,n}^{1/2}} \ll \left(\frac{y_i}{\prod_{k=1}^{n-1}c_{i,k}}\right)^{1/2} \quad\text{and}\;\;\quad \sum_{c_{m,j}} \frac1{c_{m,j}^{1/2}} \ll \left(\frac{x_j}{\prod_{h=1}^{m-1} c_{h,j}}\right)^{1/2} ,
\end{equation*}
for $i=1,\dots,m-1$ and $j=1,\dots,n-1$.
Consequently,
\begin{align*}
\sum_{\substack{c_{1,n}, \dots, c_{m-1,n} \\ c_{m,1}, \dots, c_{m,n-1}}} &\left(\frac{x_n y_m}{\prod_{i=1}^{m-1} c_{i,n}\prod_{j=1}^{n-1} c_{m,j}}\right)^{1/2} \leq (x_n y_m)^{1/2} \prod_{i \,=\, 1}^{m - 1}\left(\sum_{c_{i,n}} \frac1{c_{i,n}^{1/2}}\right) \prod_{j \,=\, 1}^{n - 1} \left(\sum_{c_{m,j}} \frac1{c_{m,j}^{1/2}}\right) \\
&\ll_{m,n} \left(\prod_{j\,=\,1}^n x_j \prod_{i\,=\,1}^m y_i \right)^{1/2} \left(\prod_{h \,=\, 1}^{m - 1} \prod_{k \,=\, 1}^{n-1}c_{h,k}\right)^{-1} .
\end{align*}
It remains only to sum over all the possibilities for $c_{h,k}$, with $h=1,\dots,m-1$ and $k=1,\dots,n-1$.
Thanks to Lemma~\ref{lem:rankin}, we have
\begin{equation*}
\sum_{c_{h,k}} \left(\,\prod_{h \,=\, 1}^{m - 1} \prod_{k \,=\, 1}^{n-1}c_{h,k}\right)^{-1} \leq \prod_{k \,=\, 1}^{n - 1} \sum_{c_{1,k} \cdots c_{m-1,k} \,\leq\, x_k} \frac1{c_{1,k} \cdots c_{m-1,k}} \ll_{m,n} \left(\,\prod_{k \,=\, 1}^{n - 1} \log x_k\right)^{m-1} ,
\end{equation*}
and the desired result follows.
\end{proof}

The next lemma is an upper bound for the number of solutions of a certain multiplicative equation with bounded factors.

\begin{lem}\label{lem:energy}
Let $m$ and $n$ be positive integers.
Then, for all $x_1, \dots, x_n, y_1, \dots, y_m \geq 2$, the number of solutions of the equation $a_1 \cdots a_n = b_1 \cdots b_m$, where $a_1, \dots, a_n, b_1, \dots, b_m$ are positive integers satisfying $a_i \leq x_i$ and $b_j \leq y_j$, for $i=1,\dots,n$ and $j=1,\dots,m$, is at most~\eqref{equ:matrixbound}.
\end{lem}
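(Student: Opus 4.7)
The plan is to reduce the counting of solutions of $a_1 \cdots a_n = b_1 \cdots b_m$ to the counting of the matrices handled by Lemma~\ref{lem:matrix}. Concretely, I will show that every solution $(a_1, \ldots, a_n, b_1, \ldots, b_m)$ meeting the size constraints lifts to (at least) one $m \times n$ matrix $(c_{i,j})$ of positive integers with
\[
\prod_{i=1}^m c_{i,j} = a_j \quad (j = 1, \ldots, n), \qquad \prod_{j=1}^n c_{i,j} = b_i \quad (i = 1, \ldots, m).
\]
The constraints $a_j \leq x_j$ and $b_i \leq y_i$ then translate verbatim into the column- and row-product hypotheses of Lemma~\ref{lem:matrix}. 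Since the tuple $(a_j, b_i)$ is recovered from any such lift $(c_{i,j})$ simply by taking column and row products, the map from matrices to solutions is surjective onto the solution set, and so the number of solutions is bounded by the number of admissible matrices; Lemma~\ref{lem:matrix} then supplies the bound~\eqref{equ:matrixbound}.

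The substantive step is thus the production of the lift $(c_{i,j})$, which I would carry out prime by prime. Setting $N := a_1 \cdots a_n = b_1 \cdots b_m$, for each prime $p \mid N$ the identity
\[
\sum_{j=1}^n v_p(a_j) \;=\; v_p(N) \;=\; \sum_{i=1}^m v_p(b_i)
\]
shows that the prescribed column sums $v_p(a_j)$ and row sums $v_p(b_i)$ are compatible, so the elementary integer transportation problem admits a non-negative integer matrix $(e_{i,j}^{(p)})$ with those marginals (for instance, the northwest-corner rule fills such a matrix explicitly). Defining $c_{i,j} := \prod_{p \mid N} p^{\,e_{i,j}^{(p)}}$ then yields an $m \times n$ matrix of positive integers whose column products have the same $p$-adic valuation as $a_j$ for every $p$, and likewise for the row products and the $b_i$; consequently $\prod_i c_{i,j} = a_j$ and $\prod_j c_{i,j} = b_i$, as required.

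I expect this prime-by-prime construction to be the only real obstacle: once the lift is available, the conclusion is an immediate invocation of Lemma~\ref{lem:matrix}, using that the lift need not be injective since we are only bounding the size of the image. In particular, no care is needed about overcounting matrices that correspond to the same solution, which keeps the argument uniform in $m$ and $n$ and avoids any loss beyond the $O_{m,n}$-constants already present in~\eqref{equ:matrixbound}.
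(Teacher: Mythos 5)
Your proposal is correct and follows essentially the same route as the paper: both reduce the count of solutions to the matrix count of Lemma~\ref{lem:matrix} by lifting each solution to a matrix $(c_{i,j})$ whose column products are the $a_j$ and whose row products are the $b_i$. The only difference is in how the lift is built --- you work prime by prime via the transportation problem with prescribed marginals, while the paper constructs the $c_{i,j}$ column by column using successive divisibility ($a_1 \mid \prod_i b_i$, then $a_2 \mid \prod_i b_i/c_{i,1}$, and so on); both constructions are valid and equally elementary.
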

\begin{proof}
If $a_1 \cdots a_n = b_1 \cdots b_m$ then there exists a $m \times n$ matrix of positive integers $(c_{i,j})$ such that $a_h = \prod_{i=1}^m c_{i,h}$ and $b_k = \prod_{j=1}^n c_{k,j}$, for $h=1,\dots,n$ and $k=1,\dots,m$.
Indeed, $a_1 \mid \prod_{i=1}^m b_i$ implies the existence of positive integers $c_{1,1}, \dots, c_{m,1}$ such that $a_1 = \prod_{i=1}^m c_{i,1}$ and $c_{i,1} \mid b_i$, for $i = 1,\dots,m$.
Then $a_2 \mid \prod_{i=1}^m b_i / c_{i,1}$, which similary implies the existence of positive integers $c_{1,2}, \dots, c_{m,2}$ such that $a_2 = \prod_{i=1}^m c_{i,2}$ and $c_{i,1} c_{i,2} \mid b_i$, for $i = 1,\dots,m$.
Then $a_3 \mid \prod_{i=1}^m b_i / (c_{i,1}c_{i,2})$, and so on, until $a_n = \prod_{i=1}^m b_i / (\prod_{j=1}^{n-1} c_{i,j})$, when we set $c_{i,n} := b_i / \prod_{j=1}^{n-1} c_{i,j}$ for $i=1,\dots,m$.
Applying Lemma~\ref{lem:matrix} we get the desired result.
\end{proof}

\section{Proof of Theorem~\ref{thm:main}}

First, we need an asymptotic for the $k$th power of the size of a random set $A$ in $\mathcal{B}(n, \alpha)$.

\begin{lem}\label{lem:Ak}
Let $A$ be a random set in $\mathcal{B}(n, \alpha)$, and fix an integer $k \geq 1$.
If $\alpha n \to +\infty$, then:
\begin{enumerate}[(i)]
\vspace{0.2em}
\setlength\itemsep{0.5em}
\item $\mathbb{E}(|A|^k) \sim (\alpha n)^k$; and
\item $|A|^k \sim (\alpha n)^k$ with probability $1 - o_k(1)$.
\end{enumerate}
\end{lem}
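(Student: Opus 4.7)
The random variable $|A|$ has the binomial distribution with parameters $n$ and $\alpha$, so both parts reduce to standard moment and concentration facts, with the single structural ingredient being the hypothesis $\alpha n \to +\infty$.

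For (ii) I would apply Chebyshev's inequality. Since $\mathbb{E}|A| = \alpha n$ and $\mathrm{Var}(|A|) = \alpha(1-\alpha)n \leq \alpha n$, for any fixed $\varepsilon > 0$,
\[
\mathbb{P}\bigl(\bigl||A| - \alpha n\bigr| \geq \varepsilon\, \alpha n\bigr) \;\leq\; \frac{\alpha n}{(\varepsilon\, \alpha n)^2} \;=\; \frac{1}{\varepsilon^{2}\alpha n} \;=\; o_\varepsilon(1).
\]
On the complementary event one has $|A| = \alpha n\bigl(1 + \delta\bigr)$ with $|\delta| < \varepsilon$, so $|A|^k = (\alpha n)^k(1+\delta)^k$, and $|(1+\delta)^k - 1| \leq 2^k \varepsilon$ for $\varepsilon < 1/2$. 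This gives $|A|^k \sim (\alpha n)^k$ with probability $1 - o_k(1)$, which is exactly the required statement.

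For (i) the cleanest route is to write $|A| = X_1 + \cdots + X_n$ with the $X_i$ i.i.d.\ Bernoulli($\alpha$) and expand $|A|^k$ as a sum over $k$-tuples $(i_1,\ldots,i_k) \in \{1,\ldots,n\}^k$. Grouping the tuples by the number $\ell \in \{1,\ldots,k\}$ of distinct indices used, and using $\mathbb{E}[X_{i_1}\cdots X_{i_k}] = \alpha^\ell$ for tuples with exactly $\ell$ distinct indices (since each $X_i$ takes values in $\{0,1\}$ and the $X_i$ are independent), one obtains a sum of $k$ terms in which the leading one ($\ell = k$) is $n(n-1)\cdots(n-k+1)\,\alpha^k \sim (\alpha n)^k$ (using $n \to +\infty$, which is forced by $\alpha n \to +\infty$ and $\alpha \leq 1$), while every other term is $O_k((\alpha n)^{k-1}) = o_k((\alpha n)^k)$ because $\alpha n \to +\infty$. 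Summing gives $\mathbb{E}|A|^k \sim (\alpha n)^k$.

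There is no genuine obstacle here; both parts are routine and the only point to keep in mind is that the hypothesis $\alpha n \to +\infty$ (rather than just $n \to +\infty$) is what drives both the Chebyshev bound in (ii) and the dominance of the leading monomial over the lower-order Stirling-type corrections in (i).
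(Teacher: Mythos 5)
Your proof is correct, but it takes a somewhat different route from the paper's. For (i), the paper simply observes that $|A|$ is binomial and cites a known asymptotic for the moments of a binomial distribution; you instead give a self-contained computation by expanding $|A|^k = (X_1+\cdots+X_n)^k$ over $k$-tuples and grouping by the number $\ell$ of distinct indices, with the $\ell=k$ term giving $n(n-1)\cdots(n-k+1)\alpha^k \sim (\alpha n)^k$ and the rest being $O_k((\alpha n)^{k-1})$ --- this is a complete and correct argument that trades a citation for a short combinatorial calculation. For (ii), the paper deduces from (i) that $\mathbb{V}(|A|^k) = \mathbb{E}(|A|^{2k}) - \mathbb{E}(|A|^k)^2 = o_k(\mathbb{E}(|A|^k)^2)$ and applies Chebyshev directly to the random variable $|A|^k$; you apply Chebyshev to $|A|$ itself using the exact binomial variance $\alpha(1-\alpha)n \le \alpha n$, and then transfer the concentration to $|A|^k$ via the elementary bound $|(1+\delta)^k - 1| \le 2^k|\delta|$. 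Your version of (ii) is arguably more elementary (it needs only the first two moments of the binomial, not the asymptotics of the $2k$th moment) and has the structural advantage of not depending on (i), whereas the paper's version leans on (i) but avoids the $\varepsilon$-bookkeeping of the transfer step. Both arguments are sound and both rest on the same hypothesis $\alpha n \to +\infty$, exactly as you note.
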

\begin{proof}
Clearly, $|A|$ follows a binomial distribution with $n$ trials and probability of success $\alpha$.
Consequently, (i) is known (see, e.g.,~\cite[Eq.~(4.1)]{MR2447945}).
In~turn, (i) implies that 
\begin{equation*}
\mathbb{V}(|A|^k) = \mathbb{E}(|A|^{2k}) - \mathbb{E}(|A|^k)^2 = o_k(\mathbb{E}(|A|^k)^2) .
\end{equation*}
Hence, by Chebyshev's inequality, for every $\varepsilon > 0$ we have
\begin{equation*}
\mathbb{P}\!\left(\big||A|^k - \mathbb{E}(|A|^k)\big| \geq \varepsilon\, \mathbb{E}(|A|^k)\right) \leq \frac{\mathbb{V}(|A|^k)}{(\varepsilon\, \mathbb{E}(|A|^k))^2} = o_{k,\varepsilon}(1) ,
\end{equation*}
so that $|A|^k \sim \mathbb{E}(|A|^k) \sim (\alpha n)^k$ with probability $1 - o_k(1)$.
\end{proof}

The next lemma is an easy bound on the size of a product set.

\begin{lem}\label{lem:prodbound}
Let $A_1, \dots, A_s$ be finite sets of positive integers, and let $k_1,\dots,k_s \geq 1$ be integers.
Then
\begin{equation*}
\left|\prod_{i \,=\, 1} A_i^{k_i}\right| \leq \prod_{i \,=\, 1}^s \binom{|A_i| + k_i - 1}{k_i} .
\end{equation*}
\end{lem}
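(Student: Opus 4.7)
The plan is to prove the two natural reductions that make the bound essentially a counting exercise. First, I would establish the single-set bound $|A^k| \leq \binom{|A|+k-1}{k}$: every element of $A^k$ can be written as a product $a_{i_1}\cdots a_{i_k}$ with each $a_{i_j} \in A$, and since multiplication is commutative, such a product is determined by the multiset $\{a_{i_1}, \dots, a_{i_k}\}$ of its factors (though possibly in a non-injective way, since different multisets of factors may yield equal products). The number of multisets of size $k$ drawn from a set of cardinality $|A|$ is exactly the ``multichoose'' quantity $\binom{|A|+k-1}{k}$, which gives the bound.

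Second, I would apply the trivial inequality $|XY| \leq |X|\,|Y|$ to the product of the sets $A_i^{k_i}$. Iterating this over $i=1,\dots,s$ yields
\begin{equation*}
\left|\prod_{i\,=\,1}^s A_i^{k_i}\right| \leq \prod_{i\,=\,1}^s |A_i^{k_i}| \leq \prod_{i\,=\,1}^s \binom{|A_i|+k_i-1}{k_i},
\end{equation*}
which is exactly the claimed bound.

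There is no real obstacle here; both ingredients are elementary. The only minor point worth stating carefully is the surjection from multisets of $k$ elements of $A$ onto $A^k$, which justifies the first inequality without needing any arithmetic information about $A$ (so the argument works for any finite set of positive integers, not just random ones).
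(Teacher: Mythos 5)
Your argument is correct and is essentially the paper's own proof: the paper likewise observes that $\binom{|A|+k-1}{k}$ counts the unordered $k$-tuples (multisets) of elements of $A$, so each $A_i^{k_i}$ has at most that many elements, and the factors combine via $|XY|\leq|X|\,|Y|$. You have simply spelled out the surjection from multisets onto products and the iteration over $i$, which the paper leaves implicit.
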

\begin{proof}
The claim follows easily considering that $\binom{|A| + k - 1}{k}$ is the number of unordered $k$-tuples of elements from a set $A$.
\end{proof}

For the rest of this section, let $A_1, \dots, A_s$ be random sets in $\mathcal{B}(n_1, \alpha_1), \dots, \mathcal{B}(n_s, \alpha_s)$, \mbox{respectively}; and let $k_1, \dots, k_s$ be fixed positive integers.
Also, assume $\alpha_i n_i \to +\infty$ and
\begin{equation}\label{equ:BigProd}
\alpha_i = o\!\left(\left((\log n_1)^{k_1 - 1}\prod_{i \,=\, 2}^s (\log n_i)^{k_i}\right)^{-(k_1 + \cdots + k_s - 1)/2}\right) ,
\end{equation}
for $i=1,\dots,s$.
For brevity, we will omit the dependence of implied constants from $k_1, \dots, k_s$.

\begin{lem}\label{lem:Ex}
We have $\mathbb{E}(|A_1^{k_1} \cdots A_s^{k_s}|) \sim \frac{(\alpha_1 n_1)^{k_1}}{k_1!} \cdots \frac{(\alpha_s n_s)^{k_s}}{k_s!}$.
\end{lem}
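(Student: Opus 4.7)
The plan is to sandwich $\mathbb{E}|A_1^{k_1}\cdots A_s^{k_s}|$ between matching asymptotics, both equal to $\prod_i (\alpha_i n_i)^{k_i}/k_i!$. The upper bound is immediate: by Lemma~\ref{lem:prodbound} and the independence of the $A_i$'s, $\mathbb{E}|A_1^{k_1}\cdots A_s^{k_s}| \leq \prod_i \mathbb{E}\binom{|A_i|+k_i-1}{k_i}$; the expansion $\binom{|A_i|+k_i-1}{k_i} = |A_i|^{k_i}/k_i! + O_{k_i}(|A_i|^{k_i-1}+1)$ combined with Lemma~\ref{lem:Ak}(i) (applied to both $\mathbb{E}|A_i|^{k_i}$ and $\mathbb{E}|A_i|^{k_i-1}$) then gives $\mathbb{E}\binom{|A_i|+k_i-1}{k_i} \sim (\alpha_i n_i)^{k_i}/k_i!$.

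For the lower bound I will introduce $U_A := \prod_i \binom{A_i}{k_i}$ together with the multiplication map $g : U_A \to A_1^{k_1}\cdots A_s^{k_s}$ defined by $g(T_1,\dots,T_s) := \prod_i \prod_{a \in T_i} a$. Writing $f(m) := |g^{-1}(m)|$ and using the elementary inequality $f - 1 \leq 2\binom{f}{2}$ (valid for every integer $f \geq 1$), I will deduce $|A_1^{k_1}\cdots A_s^{k_s}| \geq |g(U_A)| \geq |U_A| - 2E$, where $E := \sum_m \binom{f(m)}{2}$ counts unordered pairs $\{u, u'\} \subseteq U_A$ with $u \neq u'$ and $g(u) = g(u')$. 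The same falling-factorial expansion applied to $\binom{|A_i|}{k_i}$ in place of $\binom{|A_i|+k_i-1}{k_i}$ yields $\mathbb{E}|U_A| \sim \prod_i (\alpha_i n_i)^{k_i}/k_i!$, so the lemma reduces to showing $\mathbb{E}(E) = o\!\bigl(\prod_i (\alpha_i n_i)^{k_i}\bigr)$.

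The crux is a combinatorial observation: if distinct tuples $(T_i) \neq (T_i')$ satisfy $\prod_i \prod_{a \in T_i} a = \prod_i \prod_{a \in T_i'} a$, then the symmetric differences $R_i := T_i \setminus T_i'$ and $R_i' := T_i' \setminus T_i$ are disjoint, of equal size $|R_i| = |R_i'|$, and satisfy the same equation; hence $\sum_i |R_i| \geq 2$, since $\sum_i |R_i| = 1$ would force $\{a\} = R_{i_0}$, $\{a'\} = R_{i_0}'$ with $a = a'$, contradicting disjointness. Because $\mathbb{P}(u, u' \subseteq A_1 \times \cdots \times A_s) = \prod_i \alpha_i^{|T_i \cup T_i'|} = \prod_i \alpha_i^{k_i + |R_i|}$, this gives $\mathbb{P}(u, u' \subseteq A) \leq (\max_j \alpha_j)^2 \prod_i \alpha_i^{k_i}$ for every off-diagonal pair. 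Inflating each ordered pair $(u, u')$ to the $\prod_i (k_i!)^2$ ordered tuple pairs $(t, t')$ with distinct entries in each group that represent it, and invoking Lemma~\ref{lem:energy} with $m = n = K := k_1 + \cdots + k_s$ (ordering the $x$-arguments so that one factor of $\log n_1$ is omitted from $\prod_{i=1}^{K-1} \log x_i$), I will reach
\begin{equation*}
\mathbb{E}(E) \ll (\max_j \alpha_j)^2 \prod_i (\alpha_i n_i)^{k_i} \cdot \Bigl((\log n_1)^{k_1 - 1} \prod_{i \geq 2}(\log n_i)^{k_i}\Bigr)^{K-1},
\end{equation*}
and squaring~\eqref{equ:BigProd} makes the trailing factor $o(1)$, which completes the proof. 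The main obstacle is the sharp bound $\sum_i |R_i| \geq 2$: the weaker estimate $\sum_i |R_i| \geq 1$ (immediate from $(T_i) \neq (T_i')$) would save only a single $\max_j \alpha_j$, insufficient against the exponent $(K-1)/2$ in~\eqref{equ:BigProd}.
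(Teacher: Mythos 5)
Your proof is correct and follows essentially the same route as the paper: the sandwich between $\prod_i\binom{|A_i|+k_i-1}{k_i}$ and $|U_A|-2E$ is a deterministic repackaging of the paper's Bonferroni argument (with the repeated-element tuples absorbed into the upper binomial rather than bounded as a separate $o(\cdot)$ term), and the two decisive ingredients coincide, namely the gain of $(\max_j\alpha_j)^2$ from any colliding pair and the count of collisions via Lemma~\ref{lem:energy} with one $\log n_1$ omitted. Your symmetric-difference formulation ($\sum_i|R_i|\ge 2$ because $\sum_i|R_i|=1$ contradicts disjointness) is a slightly cleaner, unified version of the paper's two-case analysis ($\boldsymbol{a}_i\neq\boldsymbol{a}_i'$ at two indices versus at exactly one), but the content is the same.
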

\begin{proof}
Hereafter, in operator subscripts, let $\boldsymbol{a} := (\boldsymbol{a}_1, \dots, \boldsymbol{a}_s)$, where each $\boldsymbol{a}_i := \{a_{i,1}, \dots, a_{i,k_i}\}$ runs over the unordered $k_i$-tuples of elements of $\{1, \dots, n_i\}$.
Also, put $\|\boldsymbol{a}\| := \prod_{i=1}^s \prod_{j=1}^{k_i} a_{i,j}$.
With this notation, for each positive integer $x$, we have
\begin{equation*}
\mathbb{P}(x \in A_1^{k_1}\cdots A_s^{k_s}) = \mathbb{P}\!\left(\bigvee_{\|\boldsymbol{a}\| \,=\, x} E_{\boldsymbol{a}} \right) ,
\end{equation*}
where
\begin{equation*}
E_{\boldsymbol{a}} := \bigwedge_{i \,=\, 1}^s (\boldsymbol{a}_i \subseteq A_i) .
\end{equation*}
Consequently, by Bonferroni inequalities, we have
\begin{align*}
\mathbb{P}(x \in A_1^{k_1}\cdots A_s^{k_s}) &= \mathbb{P}\!\left(\sideset{\;}{^*}\bigvee_{\|\boldsymbol{a}\| \,=\, x} E_{\boldsymbol{a}} \right) + O\!\left(\sideset{\,\;}{^{**}}\sum_{\|\boldsymbol{a}\| \,=\, x} \mathbb{P}(E_{\boldsymbol{a}})\right) \\
&= \sideset{\;}{^*}\sum_{\|\boldsymbol{a}\| \,=\, x} \mathbb{P}(E_{\boldsymbol{a}}) + O\!\left(\sideset{\;}{^*}\sum_{\substack{\boldsymbol{a} \,\neq\, \boldsymbol{a}^\prime \\ \|\boldsymbol{a}\| \,=\, \|\boldsymbol{a}^\prime\| \,=\, x}} \mathbb{P}(E_{\boldsymbol{a}} \wedge E_{\boldsymbol{a}^\prime})\right) + O\!\left(\sideset{\;\,}{^{**}}\sum_{\|\boldsymbol{a}\| \,=\, x} \mathbb{P}(E_{\boldsymbol{a}})\right) ,
\end{align*}
where the superscript $^*$ denotes the constraint $|\boldsymbol{a}_i| = k_i$ for every $i \in \{1, \dots, s\}$, the superscript $^{**}$ denotes the complementary constrain $|\boldsymbol{a}_i| < k_i$ for at least one $i \in \{1, \dots, k\}$, and $\boldsymbol{a}^\prime := (\boldsymbol{a}_1^\prime, \dots, \boldsymbol{a}_s^\prime)$ follows the same conventions of $\boldsymbol{a}$.
Therefore,
\begin{align}\label{equ:Ex1}
\mathbb{E}(|A_1^{k_1} \cdots A_s^{k_s}|) &= \sum_{x \,\leq\, n_1^{k_1} \,\cdots\, n_s^{k_s}} \mathbb{P}(x \in A_1^{k_1}\cdots A_s^{k_s}) \\
&= \sideset{}{^*}\sum_{\boldsymbol{a}} \mathbb{P}(E_{\boldsymbol{a}}) + O\!\left(\sideset{\;}{^*}\sum_{\substack{\boldsymbol{a} \,\neq\, \boldsymbol{a}^\prime \\ \|\boldsymbol{a}\| \,=\, \|\boldsymbol{a}^\prime\|}} \mathbb{P}(E_{\boldsymbol{a}} \wedge E_{\boldsymbol{a}^\prime})\right) + O\!\left(\sideset{}{^{**}}\sum_{\boldsymbol{a}} \mathbb{P}(E_{\boldsymbol{a}})\right) \nonumber .
\end{align}
Since $A_1, \dots, A_s$ are independent and each $A_i$ belongs to $\mathcal{B}(n_i, \alpha_i)$, we have
\begin{equation*}
\mathbb{P}(E_{\boldsymbol{a}}) = \bigwedge_{i \,=\, 1}^s \mathbb{P}(\boldsymbol{a}_i \subseteq A_i) = \prod_{i \,=\, 1}^s \alpha_i^{|\boldsymbol{a}_i|} .
\end{equation*}
Hence, for every positive integers $m_1, \dots, m_s$, with $m_i \leq k_i$, we have
\begin{equation*}
\sum_{\boldsymbol{a} \,:\, |\boldsymbol{a}_i| \,=\, m_i} \mathbb{P}(E_{\boldsymbol{a}}) = \sum_{\boldsymbol{a} \,:\, |\boldsymbol{a}_i| \,=\, m_i} \prod_{i \,=\, 1}^s \alpha_i^{m_i} = \prod_{i \,=\, 1}^s \alpha_i^{m_i} \sum_{|\boldsymbol{a}_i| \,=\, m_i} 1 = \prod_{i \,=\, 1}^s \alpha_i^{m_i} \binom{n_i}{m_i}\binom{k_i - 1}{m_i - 1} ,
\end{equation*}
where we used the fact that the number of unordered $k$-tuples of elements of $\{1,\dots,n\}$ having cardinality equal to $m$ is $\binom{n}{m}\binom{k - 1}{m - 1}$.
Therefore,
\begin{equation}\label{equ:Ex2}
\sideset{}{^*}\sum_{\boldsymbol{a}} \mathbb{P}(E_{\boldsymbol{a}}) \sim \prod_{i \,=\, 1}^s \frac{(\alpha_i n_i)^{k_i}}{k_i!} \quad\text{and}\quad \sideset{}{^{**}}\sum_{\boldsymbol{a}} \mathbb{P}(E_{\boldsymbol{a}}) = o\!\left(\prod_{i \,=\, 1}^s (\alpha_i n_i)^{k_i}\right) ,
\end{equation}
as $\alpha_i n_i \to +\infty$, for $i=1,\dots,s$.
We have
\begin{equation}\label{equ:P2}
\mathbb{P}(E_{\boldsymbol{a}} \wedge E_{\boldsymbol{a}^\prime}) = \prod_{i \,=\, 1}^s \mathbb{P}(\boldsymbol{a}_i \cup \boldsymbol{a}_i^\prime \subseteq A_i) = \prod_{i \,=\, 1}^s \alpha_i^{|\boldsymbol{a}_i \,\cup\, \boldsymbol{a}_i^\prime|} .
\end{equation}
Suppose that $\boldsymbol{a}$ and $\boldsymbol{a}^\prime$, with $\boldsymbol{a} \neq \boldsymbol{a}^\prime$ and $\|\boldsymbol{a}\| = \|\boldsymbol{a}^\prime\|$, satisfy the condition of $^{*}$, that is, $|\boldsymbol{a}_i| = |\boldsymbol{a}_i^\prime| = k_i$ for $i=1,\dots,s$.
We shall find an upper bound for~\eqref{equ:P2}.
Clearly, $|\boldsymbol{a}_i \cup \boldsymbol{a}_i^\prime| \geq |\boldsymbol{a}_i| \geq k_i$ for $i = 1,\dots,s$.
Moreover, since $\boldsymbol{a} \neq \boldsymbol{a}^\prime$, there exists $i_1 \in \{1, \dots, s\}$ such that $\boldsymbol{a}_{i_1} \neq \boldsymbol{a}_{i_1}^\prime$.
Since $|\boldsymbol{a}_{i_1}| = |\boldsymbol{a}_{i_1}^\prime| = k_i$, it follows that $|\boldsymbol{a}_{i_1} \cup \boldsymbol{a}_{i_1}^\prime| \geq k_{i_1} + 1$.
On the one hand, if there exists $i_2 \in \{1,\dots,s\} \setminus \{i_1\}$ such that $\boldsymbol{a}_{i_2} \neq \boldsymbol{a}_{i_2}^\prime$, then, similarly, we have $|\boldsymbol{a}_{i_2} \cup \boldsymbol{a}_{i_2}^\prime| \geq k_{i_2} + 1$.
Hence,
\begin{equation*}
\mathbb{P}(E_{\boldsymbol{a}} \wedge E_{\boldsymbol{a}^\prime}) \leq \alpha_{i_1} \alpha_{i_2} \prod_{i \,=\, 1}^s \alpha_i^{k_i} .
\end{equation*}
On the other hand, if $\boldsymbol{a}_i = \boldsymbol{a}_i^\prime$ for every $i \in \{1,\dots,s\} \setminus \{i_1\}$, then from $\|\boldsymbol{a}\| = \|\boldsymbol{a}^\prime\|$ it follows that $\prod_{j=1}^{k_{i_1}} a_{i_1, j} = \prod_{j=1}^{k_{i_1}} a_{i_1, j}^\prime$.
In turn, this implies that $|\boldsymbol{a}_{i_1} \cup \boldsymbol{a}_{i_1}^\prime| \geq k_{i_1} + 2$.
Hence,
\begin{equation*}
\mathbb{P}(E_{\boldsymbol{a}} \wedge E_{\boldsymbol{a}^\prime}) \leq \alpha_{i_1}^2 \prod_{i \,=\, 1}^s \alpha_i^{k_i} .
\end{equation*}
Therefore, using Lemma~\ref{lem:energy} and recalling~\eqref{equ:BigProd}, we obtain
\begin{align}\label{equ:Ex3}
\sideset{\;}{^*}\sum_{\substack{\boldsymbol{a} \,\neq\, \boldsymbol{a}^\prime \\ \|\boldsymbol{a}\| \,=\, \|\boldsymbol{a}^\prime\|}} &\mathbb{P}(E_{\boldsymbol{a}} \wedge E_{\boldsymbol{a}^\prime}) \leq \left(\max_{1 \,\leq\, i,j \,\leq\, s} \alpha_i \alpha_j\right) \prod_{i \,=\, 1}^s \alpha_i^{k_i} \sum_{\|\boldsymbol{a}\| \,=\, \|\boldsymbol{a}^\prime\|} 1 \\
&\ll \left(\max_{1 \,\leq\, i,j \,\leq\, s} \alpha_i \alpha_j\right) \left((\log n_1)^{k_1 - 1}\prod_{i \,=\, 2}^s (\log n_i)^{k_i}\right)^{k_1 + \cdots + k_s - 1} \prod_{i \,=\, 1}^s (\alpha_i n_i)^{k_i} \nonumber\\
&= o\!\left(\prod_{i \,=\, 1}^s (\alpha_i n_i)^{k_i}\right) . \nonumber
\end{align}
Finally, putting together~\eqref{equ:Ex1}, \eqref{equ:Ex2}, and~\eqref{equ:Ex3}, we obtain the desired claim.
\end{proof}

\begin{proof}[Proof of Theorem~\ref{thm:main}]
Define the random variable
\begin{equation*}
X := \prod_{i \,=\, 1}^s \binom{|A_i| + k_i - 1}{k_i} - \left|\prod_{i \,=\, 1}^s A_i^{k_i} \right| .
\end{equation*}
Thanks to Lemma~\ref{lem:prodbound}, we know that $X$ is nonnegative.
Moreover, from Lemma~\ref{lem:Ak}(i) and Lemma~\ref{lem:Ex}, it follows that
\begin{equation*}
\mathbb{E}(X) = o\!\left(\prod_{i \,=\, 1}^s (\alpha_i n_i)^{k_i}\right) .
\end{equation*}
Hence, for every $\varepsilon > 0$, by Markov's inequality, we get
\begin{equation*}
\mathbb{P}\!\left(X \geq \varepsilon \prod_{i \,=\, 1}^s (\alpha_i n_i)^{k_i}\right) \leq \frac{\mathbb{E}(X)}{\varepsilon \prod_{i = 1}^s (\alpha_i n_i)^{k_i}} = o_\varepsilon(1) ,
\end{equation*}
which in turn implies $X = o\!\left(\prod_{i \,=\, 1}^s (\alpha_i n_i)^{k_i}\right)$ with probability $1 - o(1)$.
Therefore, by Lemma~\ref{lem:Ak}(ii),
\begin{equation*}
\left|\prod_{i \,=\, 1}^s A_i^{k_i} \right| = \prod_{i \,=\, 1}^s \binom{|A_i| + k_i - 1}{k_i} - X = \prod_{i \,=\, 1}^s \frac{|A_i|^{k_i}}{k_i!} + o\!\left(\prod_{i \,=\, 1}^s |A_i|^{k_i}\right) ,
\end{equation*}
with probability $1 - o(1)$, as claimed.
\end{proof}

\bibliographystyle{amsplain}
\bibliography{temp}

\end{document}